\documentclass[11pt,a4paper]{article}
\usepackage{authblk}
\usepackage{amsmath,amsxtra,amscd,amssymb,latexsym,stmaryrd,amsthm}
\usepackage{graphicx,color}
\usepackage{hyperref}
\usepackage{appendix}
\usepackage[utf8]{inputenc}
\usepackage[T1]{fontenc}
\usepackage{mathrsfs}
\usepackage[all]{xy}
\usepackage[centering,a4paper]{geometry}

\allowdisplaybreaks

\definecolor{Red}{cmyk}{0,1,1,0}

\definecolor{Blue}{cmyk}{1,1,0,0}

\theoremstyle{plain}
\newtheorem{theorem}{Theorem}
\newtheorem{corollary}{Corollary}

\theoremstyle{definition}
\newtheorem{definition}[theorem]{Definition}

\hypersetup
{
    pdfauthor={D. B. Aguiar, L. Cioletti and R. Ruviaro},
    pdfsubject={Variational Formulation of Specific Entropy},
    pdftitle={A Variational Principle for the Specific Entropy for Symbolic Systems with Uncountable Alphabets},
    pdfkeywords={Thermodynamic Formalism, Ruelle operator, entropy, equilibrium states}
}

\title{A Variational Principle for the Specific Entropy for Symbolic Systems with Uncountable Alphabets}
\author[$\S$]{D. Aguiar}
\author[$\S$]{L. Cioletti}
\author[$\S$]{R. Ruviaro}
\setcounter{Maxaffil}{0}

\affil[$\S$]{Universidade de Bras\'ilia, MAT, 70910-900, Bras\'ilia, Brazil}

\date{}
\begin{document}
    \makeatletter
    \def\blfootnote{\gdef\@thefnmark{}\@footnotetext}
    \let\@fnsymbol\@roman
    \makeatother

\maketitle

\begin{abstract}
	In this paper we derived a variational principle for the specific entropy 
	on the context of symbolic dynamics of compact metric space alphabets 
	and use this result to obtain the uniqueness 
	of the equilibrium states associated to a Walters potential.
\end{abstract}

\blfootnote{\textup{2010} \textit{Mathematics Subject Classification}: 37D35.}
\blfootnote{\textit{Keywords}: Thermodynamic Formalism, Ruelle operator, entropy, equilibrium states.}

\section{Introduction}

Let $\mathbb{N}$ be the set of positive integers,
$\mathcal{A}$ a finite alphabet, $\Omega\equiv \mathcal{A}^{\mathbb{N}}$ the product
space equipped with its usual distance $d_{\Omega}$, product topology and 
the sigma-algebra generated by the open sets.  
The dynamics in this paper is given by $\sigma:\Omega\to \Omega$, the left shift mapping. 
In this setting an {\it equilibrium state} for a continuous potential $f:\Omega\to\mathbb{R}$ 
is an element of $\mathscr{M}_{\sigma}(\Omega)$, 
the set of all shift-invariant Borel probability measures,
solving the following variational problem 
\begin{align}\label{prob-var-A-finito}
\sup_{\mu\in\mathscr{M}_{\sigma}(\Omega)} \{H(\mu)+\int_{\Omega} f\, d\mu \},
\end{align}
where $H(\mu)$ is the Kolmogorov-Sinai entropy of $\mu$.
This variational principle was introduced by Ruelle \cite{MR0217610} 
in the context of Statistical Mechanics and later Walters \cite{MR0390180} 
considered this problem in the Ergodic Theory setting. 
This classical problem still is a central one 
in Ergodic Theory/Thermodynamic Formalism and the complete 
classification of all continuous potentials
for which the problem \eqref{prob-var-A-finito} has a unique solution remains open.

One of the goals of this paper is to establish the uniqueness of the equilibrium states for 
H\"older and Walters potentials (see Definition \ref{def-walters-space} for the former)
in a more general setting, where the alphabet $\mathcal{A}$
is a general compact metric space, 
thus including cases where $\mathcal{A}$ is uncountable. 
We shall remark that this uniqueness result is well-known in the context of finite alphabets,
see \cite{MR1783787,MR1793194,MR1085356} and references therein.
The first step towards this uniqueness result on general compact metric space
alphabets was given in \cite{MR3377291}. They proved that any probability measure $\mu$
satisfying $\mathscr{L}^{*}_{\bar{f}}\mu=\mu$ (see Section \ref{sec-main-results}) is a solution to the variational problem and 
also that the set of such probability measures is a singleton. In the finite-alphabet case
this implies, by a result in \cite{MR1085356}, the uniqueness of the equilibrium states. 
But it is not evident that the results from \cite{MR1085356} can be applied in cases where 
$\mathcal{A}$ is a general compact metric space. 
The main idea presented here to solve this problem is to 
use the Ruelle operator and DLR-Gibbs measures theory, suitably adapted for 
the one-dimensional one-sided lattice.  

In what follows we recall how the Ruelle operator associated to a H\"older potential 
and the equilibrium states are linked, in the finite-alphabet context. 
Next we move the discussion to more general settings.

For each $0<\alpha< 1$, we denote by $C^{\alpha}(\Omega)\equiv C^{\alpha}(\Omega,\mathbb{R})$ 
the space of all real valued $\alpha$-H\"older continuous functions which is 
defined as usual by 
\[
\left\{ 
f:\Omega\to\mathbb{R}: 
\mathrm{Hol}_{\alpha}(f)\equiv 
\sup_{ x\neq y}\frac{|f(x)-f(y)|}{(d_{\Omega}(x,y))^{\alpha}}<\infty
\right\}.
\]

When the potential $f$ has nice regularity 
properties such as $\alpha$-H\"older continuity, equilibrium states and fine 
properties of them can be obtained by using the 
Ruelle transfer operator $\mathscr{L}_{f}$, which is 
defined for each continuous function $\varphi$ by the following expression 
\[
\mathscr{L}_{f}(\varphi)(x) = \sum_{a\in\mathcal{A}} \exp(f(ax))\varphi(ax),
\qquad \text{where}\ \ ax\equiv (a,x_1,x_2,\ldots).
\]

If $f$ is an $\alpha$-H\"older continuous function 
then the Ruelle-Perron-Frobenius theorem 
ensures, among other things, that $\lambda_{f}$, 
the spectral radius of $\mathscr{L}_{f}$ 
acting on $C^{\alpha}(\Omega)$, 
is a positive maximal isolated eigenvalue and associated 
to it we have a strictly positive eigenfunction $h_f$.
Moreover, there is a Borel probability measure 
$\nu_{f}$ on $\Omega$,  such that 
$\mathscr{L}_{f}^{*}\nu_{f}=\lambda_{f}\nu_{f}$, 
where $\mathscr{L}_{f}^{*}$ is the Banach transpose 
of $\mathscr{L}_{f}:C(\Omega)\to C(\Omega)$, and
$C(\Omega)\equiv C(\Omega,\mathbb{R})$ is the space of all 
real continuous function from $\Omega$.
In this case there is a unique equilibrium state
for $f$ and it is given, up to a normalization, 
by the probability measure $\mu_f\equiv h_f\nu_{f}$, 
see \cite{MR1793194,MR1085356,MR0234697}. 
Besides solving the variational problem the spectral data 
of the Ruelle operator can also be used to obtain 
a variational formulation of the Kolmogorov-Sinai 
entropy of $\mu_{f}$ as follows 
\[
H(\mu_f) 
=
\log|\mathcal{A}|
+ 
\inf_{g\in C^{\alpha}(\Omega)}
\{ 
-\int_{\Omega} g\ d\mu_{f} +\log\lambda_{g}
\},
\]
see \cite{MR3377291}. 
If $f\equiv 0$, then $H(\mu_{f})=\log|\mathcal{A}|$ 
and therefore when the number of symbols in our alphabet
goes to infinity, i.e., $|\mathcal{A}|\to\infty$,
the Kolmogorov-Sinai entropy of $\mu_{f}$ will be infinity. 

To handle the case of infinite alphabets, 
a new definition for entropy of a Gibbs measure 
associated to a H\"older potential 
is proposed in \cite{MR3377291}. 
To circumvent the pointed-out problem with
Kolmogorov-Sinai entropy, the authors 
considered an {\it a priori} Borel probability measure $p$ on $\mathcal{A}$ having full support,
and next a $p$-dependent concept of entropy is given. 
It takes non-positive values and attain its supremum in the product measure
$\prod_{i\in\mathbb{N}} dp$. 
The introduction of an a priori 
measure is also a standard procedure, 
in Equilibrium Statistical Mechanics, 
when dealing with continuous spin systems, see \cite{MR1241537,MR2807681}.
However, in \cite{MR3377291} no mention to a possible relation between these two approaches is made.
In \cite{2015arXiv150801297G} the authors develop an abstract theory of Thermodynamic Formalism, 
where entropy is defined as a kind of Legendre-Fenchel dual of the topological pressure.
It worth to mention that similar results are well-known in the context of Statistical Mechanics, see for example \cite{MR517873}. 
Here we show that the specific entropy coincides
with the one introduced in \cite{MR3377291} and it can be extended to the 
classical Legendre-Fenchel transform of the topological pressure, 
thus providing a concrete representation (as a Thermodynamic Limit) 
of this function in cases where the alphabet $\mathcal{A}$ is a general compact metric space. 
Furthermore, we obtain (Theorem \ref{theoremdlr}) a version of 
the important identity (15.32) of \cite{MR2807681}
\[
\lim_{n\to\infty}\frac{1}{|\Lambda_n|}
\mathscr{H}_{\Lambda_n}(\mu|\nu)
=
P(\Phi)+\langle \mu,P\rangle -\mathtt{h}^{\mathtt{s}}(\mu)
\]
in the context of symbolic dynamics for potentials in the 
Walters class, see Definition \ref{def-walters-class}. 
We also show that the entropy 
defined in \cite{MR3377291} is equal to  
the specific entropy (or mean entropy or entropy rate) 
commonly used in Statistical Mechanics (see \cite{MR2807681}) and
as by-product a variational formulation for the specific entropy is derived.
Afterwards, these results are applied to prove uniqueness of equilibrium states for 
potentials in Walters class, when $\mathcal{A}$ is a general compact metric alphabet.

We should mention that our results about the uniqueness of the equilibrium states 
for potentials in Walters space can not be deduced from the recent results in \cite{2015arXiv150801297G}. 
The first reason is related to the finite-to-one map hypothesis imposed there,
which is broken when the alphabet is infinite. 
The second reason is that the Ruelle operator associated to a Walters potential does not have, 
in general, the spectral gap property, see \cite{MR3538412}. In such cases the arguments given in 
\cite{2015arXiv150801297G} can not be used.
Although the references \cite{CL-rcontinuas-2016,MR3538412,MR3377291} already considered compact
alphabets and equilibrium states in such context, the uniqueness results discussed in these 
papers concern to the eigenmeasures of the Banach transpose of the Ruelle operator. 
As mentioned earlier, in the finite-alphabet case, the uniqueness of the eigenmeasures implies 
the uniqueness of the equilibrium states for H\"older and Walters potentials. 
For general compact metric alphabets more work is required. 
The additional results needed to prove the uniqueness of the 
equilibrium states are obtained in Section \ref{sec-main-results}.

In \cite{CL-rcontinuas-2016,MR3538412,MR3377291} a 
concept of entropy is considered 
in the context of compact metric alphabets. It is defined by a variational principle, 
but it was not recognized as the negative of the Legendre-Fenchel transform of the 
pressure functional. 
Here we obtain as a byproduct of the proof of Theorem \ref{teo-equiv-entrop}:  
the entropy considered in these works
is actually the negative of the Legendre-Fenchel dual of the pressure functional and, moreover, is equal to
the specific entropy. This equality, the content of Theorem \ref{teo-equiv-entrop}, has a further consequence
due to Proposition 15.14 in \cite{MR2807681} which reads: 
the entropy considered in these works is 
an affine function, when restricted to the subspace of shift-invariant Borel probability 
measures. Although this was a known result for dynamical systems with finite topological entropy,  
it is new for uncountable alphabets.

\section{Specific Entropy}\label{preliminar}

\hspace{.5cm} In this section we recall the definition and basic properties of the 
specific entropy, the exposition follows closely the reference \cite{MR2807681}.

From now on $\mathcal{A}$ is a compact metric 
space and $\mathscr{F}$ denotes the 
sigma-algebra of $\Omega$ generated by the open sets. 
Let $\mu, \nu$ be two probability measures on the 
measurable space $(\Omega,\mathscr{F})$. 
The negative of the \textit{relative entropy} of $\mu$ with respect to 
$\nu$ on the sub-sigma-algebra $\mathscr{A}$,
notation $\mathscr{H}_{\mathscr{A}}(\mu|\nu)$, is the extended real number
\[
\mathscr{H}_{\mathscr{A}}(\mu|\nu)
\equiv 
\begin{cases}
\displaystyle\int_{\Omega} 
\frac{d\mu|_{\mathscr{A}}}{d\nu|_{\mathscr{A}}} 
\log  \left(\frac{d\mu|_{\mathscr{A}}}{d\nu|_{\mathscr{A}}} \right)
\, d\nu, &\ \text{if}\ \mu\ll\nu \ \text{on}\ \mathscr{A};
\\[0.5cm]
\infty,&\ \text{otherwise}.
\end{cases}
\]

For each finite $\Lambda \subset \mathbb{N}$ consider the projection 
$\pi_{\Lambda}:\Omega\to \mathcal{A}^{\Lambda}$ given by 
$\pi_{\Lambda}(x) = (x_i)_{i\in \Lambda}$.
We denote by $\mathscr{F}_{\Lambda}$ the sigma-algebra generated by 
the projections $\{\pi_{\Gamma}: \Gamma\subset \Lambda \}$ and 
we define the relative entropy of $\mu$ with respect to $\nu$ in $\mathscr{F}_{\Lambda}$
by 
$
\mathscr{H}_{\Lambda}\left( \mu | \nu\right)
\equiv 
\mathscr{H}_{\mathscr{F}_{\Lambda}}\left( \mu | \nu\right).
$ 

Fix an a priori probability measure $p$ on $\mathcal{A}$.  
For each probability measure $\mu$ on  $\Omega$ and a finite volume 
$\Lambda \subset\mathbb{N}$, 
the relative entropy of $\mu$ in $\Lambda$, with respect to $p$, is defined by
$
\mathscr{H}_{\Lambda}(\mu) 
\equiv 
- 
\mathscr{H}_{\Lambda}
(\mu\, \big|\  \textstyle \prod_{i\in\mathbb{N}}p ).
$ 
If $\Lambda_n\equiv \{1,\ldots,n \}$ and $\mu\in\mathscr{M}_{\sigma}(\Omega)$, 
is shown in \cite{MR2807681} that the following limit 
\[
\mathtt{h}^{\mathtt{s}}(\mu) 
= 
\lim_{n \rightarrow \infty} \frac{\mathscr{H}_{{\Lambda}_n}(\mu)}{n}
\]
always exists in $[-\infty,0]$ and $\mathtt{h}^{\mathtt{s}}(\mu)$ 
is called the {\bf specific entropy} per site of $\mu$ relative to the a priori 
probability measure $p$. The specific entropy $\mathtt{h}^{\mathtt{s}}$ is always
a concave and upper semicontinuous function. 
Since $\mathtt{h}^{\mathtt{s}}(\prod_{i\in\mathbb{N}}p)=0$, for any choice $p$, 
$\mathtt{h}^{\mathtt{s}}$ is not identically constant equal to $-\infty$. 
An extreme case occurs when $p$ is a Dirac measure concentrated on an arbitrary point of $\mathcal{A}$. 
In this case the product measure $\prod_{i\in\mathbb{N}}p$ is the only $\sigma$-invariant 
probability measure for which this function take finite values. For the proof of these
properties and more details on specific entropy, see \cite{MR2807681}.

\section{Main Results}\label{sec-main-results}

\hspace{.5cm} In this section we obtain a variational formulation for the specific entropy 
and also prove the uniqueness of equilibrium states for a large class of potentials.
For the sake of simplicity, we present the argument for H\"older potentials and 
point out, in last section, what are the needed changes to prove the theorem 
in more general cases.

Let $(\mathcal{A},d_{\mathcal{A}})$ be an arbitrary compact metric space and
consider the symbolic space $\Omega = \mathcal{A}^{\mathbb{N}}$ 
equipped with a metric $d_{\Omega}$ which induces the product topology. 
For example, 
$
d_{\Omega}(x,y) 
\equiv \sum_{n=1}^{\infty} 2^{-n} 
d_{\mathcal{A}}(x_n,y_n)/(1+d_{\mathcal{A}}(x_n,y_n)).
$ 

Given an a priori measure $p$ on $\mathcal{A}$ and an $\alpha$-H\"older 
continuous potential $f$ we define the Ruelle operator 
$\mathscr{L}_{f}:C^{\alpha}(\Omega)\to C^{\alpha}(\Omega)$ 
as being the linear operator sending $\varphi$ to $\mathscr{L}_{f}(\varphi)$, 
which is given by the following expression 
\begin{align*}
\mathscr{L}_{f}(\varphi)(x) 
= 
\int_{\mathcal{A}} \exp(f(ax))\varphi(ax)\, dp(a),
\qquad
\text{where}\ \ ax\equiv (a,x_1,x_2,\ldots).
\end{align*}

A potential $f$ is said to be normalized if $\mathscr{L}_{f}(1)(x)=1$ for all $x\in\Omega$. 
By using the generalization of Ruelle-Perron-Frobenius theorem provided in \cite{MR3377291}
we can associate to any H\"older potential $f$ a cohomologous normalized potential 
$\bar{f}$ given by
\begin{align}\label{f-bar}
\bar f
=
f + \log h_{f} - \log (h_{f} \circ \sigma)  - \log \lambda_{f},
\end{align}
where $\lambda_f$ is a maximal eigenvalue of $\mathscr{L}_{f}$ and 
$h_{f}$ is a strictly positive $\alpha$-H\"older eigenfunction associated
$\lambda_f$. The authors also proved that  
$
\mathcal{G}^{*}(\bar{f}) 
\equiv  
\{
\nu \in \mathscr{M}_1(\Omega): \mathscr{L}_{\bar{f}}^*\nu =\nu 
\}
$
is a singleton and contained in $\mathscr{M}_{\sigma}(\Omega)$.
This measure is called here the Gibbs measure associated to the 
potential $f$.

Now we consider the following family of probability kernels
$(\gamma_n)_{n\geq 1}$, where for each $n\geq 1$ the kernel 
$\gamma_n:\mathscr{F}\times \Omega\to [0,1]$ is given by 
$
\gamma_n(A|y)
\equiv 
\mathscr{L}^n_{f}(1_A)(\sigma^n(y)),
$
where $f\in C^{\alpha}(\Omega)$ is a normalized potential.
For any probability measure $\nu$, on $\Omega$, and $y\in\Omega$ 
we define a probability measure
$\nu\gamma_n(\cdot|y)\equiv \int_{\Omega}\gamma_n(\cdot|y) \, d\nu(y)$.
We remark that if $\nu$ is such that 
$\mathscr{L}_{f}^{*}\nu=\nu$ then for any $A\in\mathscr{F}$ and 
$y\in\Omega$ we have 
\begin{align*}
\nu\gamma_n(A|y)
&=
\int_{\Omega} \mathscr{L}^n_{f}(1_A)\circ\sigma^n(y) \, d\nu(y)
=
\int_{\Omega} \mathscr{L}^n_{f}(1_A)(y) \, d\nu(y)
\\
&=
\int_{\Omega} 1_A(y) \, d[(\mathscr{L}^{*}_{f})^{n}\nu](y)
=\nu(A).
\end{align*}

Our next theorem is inspired by Theorem 15.30  of \cite{MR2807681}. 
The hypotheses there are not fully satisfied in  
our setting. Firstly, here we are working on one-dimensional one-sided lattice. 
Secondly, the family of kernels considered here is not defined by  
a uniformly summable translation invariant interaction. 
The main difference of our proof is the use of duality properties of the Ruelle operator. 

\begin{theorem}\label{theoremdlr} 
	Let $p$ be an a priori probability measure on $\mathcal{A}$ having full
	support and $f \in C^{\alpha}(\Omega)$.
	Then for each $\mu \in \mathscr{M}_{\sigma}(\Omega)$ and 
	$ \nu \in \mathcal{G}^{*}(\bar{f})$, the  following limit exists
	\[
	\lim_{n \rightarrow \infty} 
	\frac{1}{n} \mathscr{H}_{\Lambda_n}(\mu | \nu)
	\equiv 
	\mathtt{h}(\mu | \nu)
	=
	\log\lambda_{f} - \int_{\Omega}f \, d\mu - \mathtt{h}^{\mathtt{s}}(\mu).
	\]
\end{theorem}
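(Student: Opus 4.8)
The plan is to reduce everything to a single reference measure and an explicit density. The starting point is the chain rule for Radon--Nikodym derivatives: writing the restriction to $\mathscr{F}_{\Lambda_n}$ of $\mu$ against $\nu$ through the intermediate reference $\prod_{i\in\mathbb{N}}p$, one obtains
\[
\mathscr{H}_{\Lambda_n}(\mu|\nu)
=
\mathscr{H}_{\Lambda_n}\Big(\mu\,\Big|\,\textstyle\prod_{i\in\mathbb{N}} p\Big)-\int_\Omega \log\rho_n\, d\mu
=
-\,\mathscr{H}_{\Lambda_n}(\mu)-\int_\Omega \log\rho_n\, d\mu,
\]
where $\rho_n$ is the density on $\mathscr{F}_{\Lambda_n}$ of $\nu$ with respect to $\prod_{i\in\mathbb{N}}p$. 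Dividing by $n$, the first term converges to $-\mathtt{h}^{\mathtt{s}}(\mu)$ by the very definition of the specific entropy, so the whole problem collapses to the asymptotics of $\frac1n\int_\Omega\log\rho_n\,d\mu$. I would first record that $\rho_n>0$ (since $p$ has full support, $h_f$ is strictly positive, and $\bar{f}$ is bounded), so that $\nu$ and $\prod_{i\in\mathbb{N}}p$ are mutually absolutely continuous on each $\mathscr{F}_{\Lambda_n}$; this makes the two relative entropies simultaneously finite or infinite and disposes of the degenerate case $\mathtt{h}^{\mathtt{s}}(\mu)=-\infty$, in which both sides are $+\infty$.

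The key computation is the explicit form of $\rho_n$, and this is where the duality of the Ruelle operator enters. Starting from $\nu\gamma_n=\nu$ (verified in the excerpt for $\nu\in\mathcal{G}^*(\bar{f})$) and using the $\sigma$-invariance of $\nu$ to remove $\sigma^n$ under the integral, I would show that for every bounded $\mathscr{F}_{\Lambda_n}$-measurable $\varphi=\tilde\varphi(x_1,\dots,x_n)$,
\[
\int_\Omega \varphi\, d\nu
=
\int_{\mathcal{A}^n}\tilde\varphi(w)\Big[\int_\Omega e^{\sum_{k=0}^{n-1}\bar{f}(\sigma^k(wz))}\, d\nu(z)\Big]\, d\Big(\textstyle\prod_{i\in\mathbb{N}} p\Big)(w),
\]
so that $\rho_n(w)=\int_\Omega \exp\big(\sum_{k=0}^{n-1}\bar{f}(\sigma^k(wz))\big)\,d\nu(z)$, where $wz$ denotes concatenation. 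This is the one-sided symbolic analogue of the DLR disintegration: $\gamma_n$ resamples the first $n$ coordinates given the tail with conformal density $\exp(\sum_{k<n}\bar{f}\circ\sigma^k)$, and $\mathscr{L}_{\bar{f}}(1)=1$ makes it a genuine probability density, giving the sanity check $\int\rho_n\,d(\prod_{i\in\mathbb{N}} p)=1$.

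Then comes the main estimate. Because $\bar{f}$ is $\alpha$-H\"older and $d_\Omega$ contracts geometrically along matching coordinates, the Birkhoff sums $\sum_{k=0}^{n-1}\bar{f}(\sigma^k(wz))$ and $\sum_{k=0}^{n-1}\bar{f}(\sigma^k x)$ differ by at most $\mathrm{Hol}_\alpha(\bar{f})\sum_{j\ge 1}C\,2^{-\alpha j}$, uniformly in $n$, in $w=(x_1,\dots,x_n)$ and in $z$. Integrating the constant $\exp(\sum_{k<n}\bar{f}(\sigma^k x))$ against the probability measure $\nu$ gives $\log\rho_n(w)=\sum_{k=0}^{n-1}\bar{f}(\sigma^k x)+O(1)$ uniformly. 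Integrating against $\mu$, dividing by $n$, and using $\sigma$-invariance of $\mu$ yields $\frac1n\int_\Omega\log\rho_n\,d\mu\to\int_\Omega\bar{f}\,d\mu$. Finally I substitute the cohomology relation \eqref{f-bar}: the telescoping terms $\log h_f-\log(h_f\circ\sigma)$ integrate to zero against the $\sigma$-invariant $\mu$, leaving $\int_\Omega\bar{f}\,d\mu=\int_\Omega f\,d\mu-\log\lambda_f$. Combining, $\frac1n\mathscr{H}_{\Lambda_n}(\mu|\nu)\to-\mathtt{h}^{\mathtt{s}}(\mu)-\int_\Omega\bar{f}\,d\mu=\log\lambda_f-\int_\Omega f\,d\mu-\mathtt{h}^{\mathtt{s}}(\mu)$, and the limit exists because both summands converge.

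The step I expect to be the genuine obstacle is the derivation of the density formula for $\rho_n$, which is precisely where duality must replace the interaction-based arguments of Theorem 15.30 of \cite{MR2807681}: one has to use carefully that $\gamma_n(\cdot|y)$ depends on $y$ only through $\sigma^n y$, that $\sigma$-invariance of $\nu$ legitimately removes the $\sigma^n$, and that the full disintegration (not merely the marginal identity) is valid. Once the density is in hand, the chain rule, the bounded-distortion estimate, and the cohomological simplification are all routine.
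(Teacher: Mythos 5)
Your proposal is correct and follows essentially the same route as the paper: both decompose $\mathscr{H}_{\Lambda_n}(\mu|\nu)$ through the product reference measure $\prod_{i\in\mathbb{N}}p$, identify the conformal density $\exp(S_n(\bar f))$ via duality of the Ruelle operator together with $\nu\gamma_n=\nu$, and finish with the uniform H\"older (bounded-distortion) estimate and the cohomology relation \eqref{f-bar}. The only cosmetic difference is that the paper passes through the intermediate quantity $\mathscr{H}_{\Lambda_n}(\mu|\gamma_n(\cdot|y))$ with a fixed boundary condition $y$ and then shows the discrepancy with $\mathscr{H}_{\Lambda_n}(\mu|\nu)$ is $o(n)$, whereas you compute the density $\rho_n(w)=\int_\Omega\exp(S_n(\bar f)(wz))\,d\nu(z)$ of $\nu$ directly; these amount to the same computation.
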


\begin{proof}
	If for some $n\in\mathbb{N}$, we have $\mathscr{H}_{\Lambda_n}(\mu | \nu)=+\infty$, then 
	$\mathscr{H}_{\Lambda_n}(\mu |\nu\gamma_n(\cdot|y))=+\infty$ and therefore 
	$\mathscr{H}_{\Lambda_n}(\mu |\gamma_n(\cdot|y))=+\infty$, so 
	$\mu|_{\mathscr{F}_{\Lambda_n}}$ is not absolutely 
	continuous with respect to  $\gamma_n(\cdot|y)|_{\mathscr{F}_{\Lambda_n}}$. Since $f$ is bounded, it follows from the definition of 
	$\gamma_n(\cdot|y)$ that $\mu|_{\mathscr{F}_{\Lambda_n}}$ is not absolutely 
	continuous with respect to $\pmb{p}|_{\mathscr{F}_{\Lambda_n}}$. 
	Since the relative entropy is an increasing function of $\mathscr{F}_{\Lambda_n}$
	we have 
	$
	\mathscr{H}_{\Lambda_j}(\mu|\gamma_j(\cdot|y)) = + \infty, \ \forall \  j \geq n
	$
	and also $\mathtt{h}^{\mathtt{s}}(\mu)=-\infty$, which proves the theorem in this case. 
	Therefore we can assume $\mathscr{H}_{\Lambda_n}(\mu|\gamma_n(\cdot|y)) < + \infty $, for all $n \in \mathbb{N}$.
	To lighten the notation 
	we write $d\pmb{p}\equiv \prod_{i\in \mathbb{N}}dp$
	to denote the product measure. Let $\bar{f}$ a normalized potential 
	cohomologous to $f$ and 
	$
	\gamma_n(A|y)
	\equiv 
	\mathscr{L}^n_{\bar{f}}(1_A)(\sigma^n(y)).
	$
	If $\mathscr{H}_{\Lambda_n}(\mu|\gamma_n(\cdot|y))$ is finite for all $n\geq 1$ 
	then   
	\begin{align*}
	\mathscr{H}_{\Lambda_n}(\mu|\gamma_n&(\cdot|y))
	=
	\int_{\Omega} 
	\frac{ d\mu|_{ \mathscr{F}_{\Lambda_{n}}  }  }
	{d{\gamma_n(\cdot|y)}|_{\mathscr{F}_{\Lambda_n}} }
	\log 
	\frac{ d\mu|_{ \mathscr{F}_{\Lambda_{n}}  }  }
	{d{\gamma_n(\cdot|y)}|_{\mathscr{F}_{\Lambda_n}} }
	\, 
	d\gamma_{n}(\cdot|y)
	\\
	&=
	\mathscr{L}^n_{\bar{f}}
	\left(  
	\frac{ d\mu|_{ \mathscr{F}_{\Lambda_{n}}  }  }
	{d{\gamma_n(\cdot|y)}|_{\mathscr{F}_{\Lambda_n}} }
	\log 
	\frac{ d\mu|_{ \mathscr{F}_{\Lambda_{n}}  }  }
	{d{\gamma_n(\cdot|y)}|_{\mathscr{F}_{\Lambda_n}} }
	\right)
	(\sigma^n(y))
	\\
	&=
	\int_{\Omega}
	\exp(S_n(\bar{f}))
	\frac{ d\mu|_{ \mathscr{F}_{\Lambda_{n}}  }  }
	{d{\gamma_n(\cdot|y)}|_{\mathscr{F}_{\Lambda_n}} }
	\log 
	\frac{ d\mu|_{ \mathscr{F}_{\Lambda_{n}}  }  }
	{d{\gamma_n(\cdot|y)}|_{\mathscr{F}_{\Lambda_n}} }
	\prod_{i\in\Lambda_n}dp\times \prod_{i\in\Lambda_n^c}d\delta_{y_i}
	\\
	&=
	\int_{\Omega}
	\frac{d{\gamma_n(\cdot|y)}|_{\mathscr{F}_{\Lambda_n}} }
	{d\pmb{p}|_{ \mathscr{F}_{\Lambda_{n}}  }    }
	\frac{ d\mu|_{ \mathscr{F}_{\Lambda_{n}}  }  }
	{d{\gamma_n(\cdot|y)}|_{\mathscr{F}_{\Lambda_n}} }
	\log 
	\frac{ d\mu|_{ \mathscr{F}_{\Lambda_{n}}  }  }
	{d{\gamma_n(\cdot|y)}|_{\mathscr{F}_{\Lambda_n}} }
	\prod_{i\in\Lambda_n}dp\times \prod_{i\in\Lambda_n^c}d\delta_{y_i}.
	\\
	&=
	\int_{\Omega}
	\frac{ d\mu|_{ \mathscr{F}_{\Lambda_{n}}  }  }
	{ d\pmb{p}|_{ \mathscr{F}_{\Lambda_{n}}  }    }
	\log 
	\frac{ d\mu|_{ \mathscr{F}_{\Lambda_{n}}  }  }
	{d{\gamma_n(\cdot|y)}|_{\mathscr{F}_{\Lambda_n}} }
	\prod_{i\in\Lambda_n}dp\times \prod_{i\in\Lambda_n^c}d\delta_{y_i},
	\end{align*}
	where $S_n(\bar{f})\equiv \bar{f}+\bar{f}\circ\sigma+\ldots+\bar{f}\circ\sigma^{n-1}$.
	Since the above integrand is $\mathscr{F}_{\Lambda_n}$-measurable 
	we get 
	\[
	\mathscr{H}_{\Lambda_n}(\mu|\gamma_n(\cdot|y))
	=
	\int_{\Omega}
	\frac{ d\mu|_{ \mathscr{F}_{\Lambda_{n}}  }  }
	{ d\pmb{p}|_{ \mathscr{F}_{\Lambda_{n}}  }    }
	\log 
	\frac{ d\mu|_{ \mathscr{F}_{\Lambda_{n}}  }  }
	{d{\gamma_n(\cdot|y)}|_{\mathscr{F}_{\Lambda_n}} }
	\ d\pmb{p}.
	\]
	From the properties of the Radon-Nikodym derivative 
	follows  that 
	\begin{align*}
	\mathscr{H}_{\Lambda_n}(\mu|\gamma_n(\cdot|y))
	&=
	\int_{\Omega}
	\frac{ d\mu|_{ \mathscr{F}_{\Lambda_{n}}  }  }
	{d\pmb{p}|_{ \mathscr{F}_{\Lambda_{n}}  } }
	\log 
	\left(
	\frac{ d\mu|_{ \mathscr{F}_{\Lambda_{n}}  }  }
	{d{\gamma_n(\cdot|y)}|_{\mathscr{F}_{\Lambda_n}} }
	\frac{ d{\gamma_n(\cdot|y)}|_{\mathscr{F}_{\Lambda_n}}  }
	{ d\pmb{p}|_{ \mathscr{F}_{\Lambda_{n}}  } }
	\right) 
	\ d\pmb{p}
	\\
	&\qquad -
	\int_{\Omega}
	\frac{ d\mu|_{ \mathscr{F}_{\Lambda_{n}}  }  }
	{d\pmb{p}|_{ \mathscr{F}_{\Lambda_{n}}  } }
	\log 
	\frac{ d{\gamma_n(\cdot|y)}|_{\mathscr{F}_{\Lambda_n}}  }
	{ d\pmb{p}|_{ \mathscr{F}_{\Lambda_{n}}  } }
	\ d\pmb{p}
	\\[0.3cm]
	&=
	\int_{\Omega}
	\frac{ d\mu|_{ \mathscr{F}_{\Lambda_{n}}  }  }
	{d\pmb{p}|_{ \mathscr{F}_{\Lambda_{n}}  } }
	\log 
	\frac{ d\mu|_{ \mathscr{F}_{\Lambda_{n}}  }  }
	{d\pmb{p}|_{ \mathscr{F}_{\Lambda_{n}}  } }
	\ d\pmb{p}
	-
	\int_{\Omega} S_n(\bar{f})(x_{\Lambda_n}y_{\Lambda_{n}^c})  d\mu(x) 
	\\
	&=
	-\mathscr{H}_{\Lambda_n}(\mu)
	-\int_{\Omega} S_n(\bar{f})(x_{\Lambda_n}y_{\Lambda_{n}^c})  d\mu(x),
	\end{align*}
	where
	$(x_{\Lambda}y_{\Lambda^c})_i=x_i$, if $i\in \Lambda$ and 
	$(x_{\Lambda}y_{\Lambda^c})_i=y_i$, otherwise.
	
	\medskip 
	Note that 
	\begin{align}\label{eq-free-energy}
	\int_{\Omega} \bar{f}\, d\mu 
	= 
	\lim_{n \rightarrow \infty} 
	\frac{1}{n}
	\int_{\Omega} S_n(\bar{f})(x_{\Lambda_n}y_{\Lambda_n^c}) \, d\mu(x).
	\end{align}
	Indeed, for all $n\geq 1$, we have 
	$
	|S_n(\bar{f})(x_{\Lambda_n}y_{\Lambda_n^c})- S_n(\bar{f})(x)|
	\leq 
	\sum_{j=0}^n 2^{-\alpha j} \mathrm{Hol}_{\alpha}(\bar{f}).
	$
	From this observation and the $\sigma$-invariance of $\mu$ 
	the claim follows.
	\bigskip 
	
	Recall that for any $\mu\in \mathscr{M}_{\sigma}(\Omega)$ we have 
	$\mathscr{H}_{\Lambda_n}(\mu)/n\to \mathtt{h}^{\mathtt{s}}(\mu)$,
	when $n\to\infty$. This convergence together with \eqref{eq-free-energy}
	implies the existence of the following limit  
	\[
	\lim_{n\to\infty}
	\frac{\mathscr{H}_{\Lambda_n}(\mu|\gamma_n(\cdot|y))}{n}
	=
	-\mathtt{h}^{\mathtt{s}}(\mu)-\int_{\Omega}\bar{f}\, d\mu
	=
	\log \lambda_{f}-\mathtt{h}^{\mathtt{s}}(\mu)-\int_{\Omega}f\, d\mu,
	\]
	where in the last equality we used that $\mu\in\mathscr{M}_{\sigma}(\Omega)$
	and the expression \eqref{f-bar}.

	\medskip
	To finish the proof we only need to show that   
	$n^{-1}\mathscr{H}_{\Lambda_n}(\mu|\gamma_n(\cdot|y))\to \mathtt{h}(\mu | \nu)$,
	when $n\to\infty$, for any choice of $y\in\Omega$.
	\begin{align}
	\mathscr{H}_{\Lambda_n} (\mu  &| \nu  )
	=
	\mathscr{H}_{\Lambda_n} (\mu  | \nu\gamma_n(\cdot| y)  )
	\nonumber
	\\
	&=
	\int_{\Omega}        
	\frac{d\mu|_{\mathscr{F}_{\Lambda_n} }  }{ d [{\nu\gamma_n(\cdot| y)}|_{\mathscr{F}_{\Lambda_n} }]}
	\log 
	\frac{d\mu|_{\mathscr{F}_{\Lambda_n} }  }{ d [{\nu\gamma_n(\cdot| y)}|_{\mathscr{F}_{\Lambda_n} }]}
	d [\nu\gamma_n(\cdot| y)]
	\nonumber
	\\[0.3cm]
	&=
	\int_{\Omega}        
	\frac{d\mu|_{\mathscr{F}_{\Lambda_n} }  }{ d\pmb{p}|_{\mathscr{F}_{\Lambda_n} } }
	\left(
	\frac{d [{\nu\gamma_n(\cdot| y)}|_{\mathscr{F}_{\Lambda_n} }]  }{ d\pmb{p}|_{\mathscr{F}_{\Lambda_n} } }
	\right)^{-1}
	\log 
	\frac{d\mu|_{\mathscr{F}_{\Lambda_n} }  }{ d [{\nu\gamma_n(\cdot| y)}|_{\mathscr{F}_{\Lambda_n} }]}
	d [\nu\gamma_n(\cdot| y)]
	\nonumber 
	\\[0.3cm]
	&=
	\int_{\Omega}        
	\log 
	\frac{d\mu|_{\mathscr{F}_{\Lambda_n} }  }{ d [{\nu\gamma_n(\cdot| y)}|_{\mathscr{F}_{\Lambda_n} }]}
	\frac{d [{\gamma_n(\cdot| y)}|_{\mathscr{F}_{\Lambda_n} }]  }{ d\pmb{p}|_{\mathscr{F}_{\Lambda_n} } }
	d \mu
	-
	\int_{\Omega}        
	\log 
	\frac{d [{\gamma_n(\cdot| y)}|_{\mathscr{F}_{\Lambda_n} }]  }{ d\pmb{p}|_{\mathscr{F}_{\Lambda_n} } }
	d \mu
	\nonumber
	\\[0.3cm]
	&=
	\int_{\Omega}        
	\log 
	\frac{d\mu|_{\mathscr{F}_{\Lambda_n} }  }{ d [{\gamma_n(\cdot| y)}|_{\mathscr{F}_{\Lambda_n} }]}
	d \mu
	+
	\int_{\Omega}        
	\log 
	\frac{d [{\gamma_n(\cdot| y)}|_{\mathscr{F}_{\Lambda_n} }]  }{ d\pmb{p}|_{\mathscr{F}_{\Lambda_n} } }
	\left(
	\frac{d [{\nu\gamma_n(\cdot| y)}|_{\mathscr{F}_{\Lambda_n} }]  }{ d\pmb{p}|_{\mathscr{F}_{\Lambda_n} } }
	\right)^{-1}
	d \mu
	\nonumber 
	\\[0.3cm]
	&=
	\mathscr{H}_{ \Lambda_n }(\mu | \gamma_n(\cdot|y)  )
	\nonumber
	\\
	&\qquad +
	\int_{\Omega}
	\left[
	S_n(f)(x_{\Lambda_n}y_{\Lambda_n^c})
	-
	\log 
	\int_{\Omega} 
	\exp(S_n(f)(x_{\Lambda_n}z_{\Lambda_n^c}))
	\, d\nu(z) 	\
	\right]
	d\mu(x)
	\label{eq-aux1}
	\\[0.3cm]
	&=
	\mathscr{H}_{ \Lambda_n }(\mu | \gamma_n(\cdot|y)  )
	+o(n)\nonumber,
	\end{align}
	where the last expression follows from \eqref{eq-free-energy} together with the
	inequality
	$
	|\log \int_{\Omega}\exp(\varphi) d\mu - \log \int_{\Omega}\exp(\psi) d\mu|\leq \|\varphi-\psi\|_{\infty}.
	$
\end{proof}

\begin{corollary}\label{corineq} 
	For all $f \in C^{\alpha}(\Omega)$ and $ \mu \in \mathscr{M}_{\sigma}(\Omega)$ 
	we have $\mathtt{h}(\mu | \mu_{\bar{f}}) \geq 0$. 
\end{corollary}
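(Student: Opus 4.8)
The plan is to combine two facts: the formula furnished by Theorem~\ref{theoremdlr}, which guarantees that $\mathtt{h}(\mu\,|\,\mu_{\bar f})$ exists as the limit of the Ces\`aro averages $\frac1n\mathscr{H}_{\Lambda_n}(\mu\,|\,\mu_{\bar f})$, together with the elementary non-negativity of each finite-volume relative entropy. First I would check that $\mu_{\bar f}$ is an admissible choice of $\nu$ in Theorem~\ref{theoremdlr}, i.e.\ that $\mu_{\bar f}\in\mathcal{G}^*(\bar f)$. Since $\bar f$ is normalized we have $\mathscr{L}_{\bar f}(1)=1$, so the maximal eigenvalue is $\lambda_{\bar f}=1$ and the eigenfunction may be taken to be $h_{\bar f}\equiv 1$; consequently $\mu_{\bar f}=h_{\bar f}\nu_{\bar f}=\nu_{\bar f}$ satisfies $\mathscr{L}_{\bar f}^{*}\mu_{\bar f}=\mu_{\bar f}$, which places it in the singleton $\mathcal{G}^{*}(\bar f)$. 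Hence Theorem~\ref{theoremdlr} applies verbatim with $\nu=\mu_{\bar f}$.

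Next I would record that for every fixed $n$ one has $\mathscr{H}_{\Lambda_n}(\mu\,|\,\mu_{\bar f})\geq 0$. In the non-absolutely-continuous case the value is $+\infty\geq 0$, so there is nothing to prove. Otherwise, writing $g\equiv d\mu|_{\mathscr{F}_{\Lambda_n}}/d\mu_{\bar f}|_{\mathscr{F}_{\Lambda_n}}$, the defining expression reads $\int_{\Omega} g\log g\,d\mu_{\bar f}$, and this is precisely the Kullback--Leibler divergence of the two restrictions. Its non-negativity follows from Gibbs' inequality: the map $\phi(t)=t\log t$ is convex, $\mu_{\bar f}|_{\mathscr{F}_{\Lambda_n}}$ is a probability measure, and $\int_{\Omega} g\,d\mu_{\bar f}=\mu|_{\mathscr{F}_{\Lambda_n}}(\Omega)=1$, so Jensen's inequality gives $\int_{\Omega}\phi(g)\,d\mu_{\bar f}\geq\phi\bigl(\int_{\Omega} g\,d\mu_{\bar f}\bigr)=\phi(1)=0$. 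Here one must keep track of the paper's sign convention: although $\mathscr{H}$ is introduced as the \emph{negative} of the relative entropy, the displayed formula defining it is exactly the (non-negative) relative entropy in the information-theoretic sense, so the inequality points in the expected direction.

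Finally, dividing by the positive constant $n$ preserves the sign, and since Theorem~\ref{theoremdlr} ensures $\frac1n\mathscr{H}_{\Lambda_n}(\mu\,|\,\mu_{\bar f})\to\mathtt{h}(\mu\,|\,\mu_{\bar f})$, passing to the limit through the inequality yields $\mathtt{h}(\mu\,|\,\mu_{\bar f})\geq 0$, as claimed. I do not anticipate any genuine obstacle: the entire content is the observation that the corollary's quantity is a limit of non-negative relative entropies, once $\mu_{\bar f}$ has been identified with the normalized eigenmeasure. The only delicate point is the sign bookkeeping noted above, which merely confirms that no spurious change of sign intervenes.
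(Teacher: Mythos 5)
Your proposal is correct and matches the paper's (implicit) argument: the paper states Corollary~\ref{corineq} without proof, treating it as immediate from Theorem~\ref{theoremdlr} once one notes that $\mu_{\bar f}=\nu_{\bar f}\in\mathcal{G}^{*}(\bar f)$ (since $\lambda_{\bar f}=1$ and $h_{\bar f}\equiv 1$) and that each finite-volume quantity $\mathscr{H}_{\Lambda_n}(\mu\,|\,\mu_{\bar f})=\int g\log g\,d\mu_{\bar f}\geq 0$ by Jensen's inequality. Your sign bookkeeping is also right: despite the paper's wording, the displayed formula defining $\mathscr{H}_{\mathscr{A}}(\mu|\nu)$ is the usual non-negative relative entropy, so the limit of the normalized quantities is indeed $\geq 0$.
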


\bigskip 

We now present the entropy considered in \cite{MR3377291} and then 
we prove that it coincides with the specific entropy on the set
of all Borel shift-invariant probability measures.

\begin{definition}
	Given a Borel probability measure $\mu$ on $\Omega$, we define its entropy as
	follows
	\begin{align*}
	\mathtt{h}^{\!\mathtt{v}}(\mu)
	\equiv 
	\mathtt{h}^{\!\mathtt{v},p}(\mu)
	\equiv
	\inf_{g \in C^{\alpha}(\Omega)}
	\left\lbrace - \int_{\Omega} g \  d\mu + \log \lambda_g \right\rbrace ,
	\end{align*}
	where $\lambda_g$ is the maximal eigenvalue of $\mathscr{L}_g$. 
\end{definition}

\begin{theorem}\label{teo-equiv-entrop}
	For all $\mu\in \mathscr{M}_{\sigma}(\Omega)$  we have  
	$\mathtt{h}^{\mathtt{s}}(\mu) = \mathtt{h}^{\!\mathtt{v}}(\mu)$.
\end{theorem}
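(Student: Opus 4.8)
The plan is to recognize $\mathtt{h}^{\!\mathtt{v}}$ as the Legendre–Fenchel biconjugate of $\mathtt{h}^{\mathtt{s}}$ and to close the argument with the Fenchel–Moreau theorem. I would work with the dual pairing $\langle \mu, g\rangle = \int_{\Omega} g\, d\mu$ between $\mathscr{M}_{\sigma}(\Omega)$, endowed with the weak-$*$ topology, and the test functions $g \in C^{\alpha}(\Omega)$. For a concave function $\phi$ on $\mathscr{M}_{\sigma}(\Omega)$ define its conjugate $\phi^{\#}(g) = \sup_{\mu}\{\int_{\Omega} g\, d\mu + \phi(\mu)\}$ and its biconjugate $\phi^{\#\#}(\mu) = \inf_{g}\{\phi^{\#}(g) - \int_{\Omega} g\, d\mu\}$. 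With this notation, once one shows that $(\mathtt{h}^{\mathtt{s}})^{\#}(g) = \log\lambda_{g}$, the very definition of $\mathtt{h}^{\!\mathtt{v}}$ gives $\mathtt{h}^{\!\mathtt{v}} = (\mathtt{h}^{\mathtt{s}})^{\#\#}$, and the generic inequality $\phi \le \phi^{\#\#}$ already reproduces the easy bound $\mathtt{h}^{\mathtt{s}} \le \mathtt{h}^{\!\mathtt{v}}$.

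The core step is therefore the Gibbs variational identity
\[
\log\lambda_{g} = \sup_{\mu\in\mathscr{M}_{\sigma}(\Omega)}\Big\{\int_{\Omega} g\, d\mu + \mathtt{h}^{\mathtt{s}}(\mu)\Big\}, \qquad g \in C^{\alpha}(\Omega).
\]
The inequality ``$\ge$'' is exactly Theorem \ref{theoremdlr} together with the non-negativity of the relative entropy recorded in Corollary \ref{corineq}: for every $\mu$ and the unique $\nu\in\mathcal{G}^{*}(\bar g)\subset\mathscr{M}_{\sigma}(\Omega)$ one has $0 \le \mathtt{h}(\mu|\nu) = \log\lambda_{g} - \int_{\Omega} g\, d\mu - \mathtt{h}^{\mathtt{s}}(\mu)$. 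For the reverse inequality I would show that the supremum is attained at the Gibbs measure itself: taking $\mu = \nu \in \mathcal{G}^{*}(\bar g)$ in Theorem \ref{theoremdlr} and using that $\mathscr{H}_{\Lambda_n}(\nu|\nu) = 0$ for all $n$, so that $\mathtt{h}(\nu|\nu) = 0$, yields $\log\lambda_{g} = \int_{\Omega} g\, d\nu + \mathtt{h}^{\mathtt{s}}(\nu)$. Hence $(\mathtt{h}^{\mathtt{s}})^{\#}(g) = \log\lambda_{g}$ on $C^{\alpha}(\Omega)$.

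It then remains to invoke biduality. Since, by Section \ref{preliminar}, $\mathtt{h}^{\mathtt{s}}$ is concave, weak-$*$ upper semicontinuous and not identically $-\infty$ (it vanishes at $\prod_{i\in\mathbb{N}}p$), it is a proper closed concave function, so the Fenchel–Moreau theorem gives $(\mathtt{h}^{\mathtt{s}})^{\#\#} = \mathtt{h}^{\mathtt{s}}$, whence $\mathtt{h}^{\!\mathtt{v}} = \mathtt{h}^{\mathtt{s}}$. One technical point must be handled with care: the conjugate above is formed over the subspace $C^{\alpha}(\Omega)$ rather than over all of $C(\Omega)$. This is harmless, because $C^{\alpha}(\Omega)$ is uniformly dense in $C(\Omega)$ and separates the points of $\mathscr{M}_{\sigma}(\Omega)$, and because the map $g \mapsto \log\lambda_{g}$ is $1$-Lipschitz for the uniform norm (from $|\log\lambda_{g} - \log\lambda_{h}|\le \|g-h\|_{\infty}$); thus the infimum defining the biconjugate is unchanged when the Hölder test functions are replaced by arbitrary continuous ones, and the weak-$*$ topology generated by $C^{\alpha}(\Omega)$ agrees with the usual one on the set of probability measures.

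The main obstacle I anticipate is precisely this analytic point: making the passage between Hölder and continuous test functions rigorous and verifying that the hypotheses of Fenchel–Moreau (properness and weak-$*$ closedness of $\mathtt{h}^{\mathtt{s}}$) genuinely hold in the present infinite-dimensional, possibly uncountable-alphabet setting. By contrast, the Gibbs variational identity displayed above is comparatively soft, being a direct consequence of Theorem \ref{theoremdlr} once one observes that the defining expression for $\mathtt{h}(\mu|\nu)$ vanishes when $\mu = \nu$.
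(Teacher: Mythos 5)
Your proposal is correct and takes essentially the same route as the paper: the paper's proof is precisely the Fenchel--Moreau biduality argument unfolded by hand, extending $\mathtt{h}^{\mathtt{s}}$ and $\mathtt{h}^{\!\mathtt{v}}$ by $-\infty$ to $\mathscr{M}_{s}(\Omega)$, separating the point $(\nu,-\mathtt{h}^{\!\mathtt{v}}(\nu))$ from the closed convex epigraph of $-\mathtt{h}^{\mathtt{s}}$ by a Hahn--Banach functional, and then approximating that functional by a H\"older one --- exactly the density/Lipschitz step you single out as the delicate point. The only presentational difference is that you isolate the conjugate identity $(\mathtt{h}^{\mathtt{s}})^{\#}(g)=\log\lambda_{g}$ as an explicit lemma, deriving attainment at the Gibbs measure from Theorem \ref{theoremdlr} via $\mathtt{h}(\nu|\nu)=0$, whereas the paper obtains the same chain of inequalities inside the separation argument by combining Corollary \ref{corineq} with Theorem 3 of \cite{MR3377291}.
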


\begin{proof}
	Given $\mu\in \mathscr{M}_{\sigma}(\Omega)$
	it follows from Corollary \ref{corineq} that 
	$\mathtt{h}^{\mathtt{s}}(\mu)\leq \log \lambda_{g}- \int_{\Omega} g\, d\mu $,
	for all $g\in C^{\alpha}(\Omega)$. 
	Therefore $\mathtt{h}^{\mathtt{s}}(\mu)\leq \mathtt{h}^{\!\mathtt{v}}(\mu)$.
	
	The remainder of the proof is by contradiction.
	For the sake of simplicity, we will work on $\mathscr{M}_{s}(\Omega)$,
	the topological vector space of all finite Borel signed measures 
	endowed with the weak-$*$ topology, and with 
	extensions $\mathtt{h}^{\mathtt{s}},\mathtt{h}^{\!\mathtt{v}}:\mathscr{M}_{s}(\Omega)\to [-\infty,0]$ 
	of $\mathtt{h}^{\mathtt{s}}$ and $\mathtt{h}^{\!\mathtt{v}}$, respectively, given by
	\begin{align*}
	\mathtt{h}^{\mathtt{s}}(\mu)
	=
	\begin{cases}
	\mathtt{h}^{\mathtt{s}}(\mu),& 
	\text{if}\ \mu\in\mathscr{M}_{\sigma}(\Omega)
	\\
	-\infty,&\text{otherwise}.
	\end{cases}
	\qquad 
	\mathtt{h}^{\!\mathtt{v}}(\mu)
	=
	\begin{cases}
	\mathtt{h}^{\!\mathtt{v}}(\mu),& 
	\text{if}\ \mu\in\mathscr{M}_{\sigma}(\Omega)
	\\
	-\infty,&\text{otherwise}.
	\end{cases}
	\end{align*}
	
	We split the proof in two separate cases.
	\\
	\noindent{\bf Case 1}. 
	Suppose that there is $\nu\in \mathscr{M}_{s}(\Omega)$ such that 
	$-\mathtt{h}^{\!\mathtt{v}}(\nu)<-\mathtt{h}^{\mathtt{s}}(\nu)< +\infty$.
	Since the negative of the extension of the specific entropy 
	$-\mathtt{h}^{\mathtt{s}}:\mathscr{M}_{s}(\Omega)\to [0,+\infty]$ 
	is convex, lower semicontinuous and bounded from below, we have that its epigraph 
	$
	\mathrm{epi}(-\mathtt{h}^{\mathtt{s}})
	\equiv 
	\{ (\mu,t) \in \mathscr{M}_{\sigma}(\Omega)\times\mathbb{R} : 
	-\mathtt{h}^{\mathtt{s}}(\mu)\leq t
	\}
	$
	is a convex closed subset of $\mathscr{M}_{s}(\Omega)\times \mathbb{R}$.
	By assumption we have 
	$(\nu,-\mathtt{h}^{\!\mathtt{v}}(\nu))\notin \mathrm{epi}(-\mathtt{h}^{\mathtt{s}})$.
	Therefore, there are  $c\in\mathbb{R}$ and a continuous linear functional 
	$F:\mathscr{M}_{s}(\Omega)\times\mathbb{R}\to\mathbb{R}$, 
	such that for all $(\mu,t)\in \mathrm{epi}(-\mathtt{h}^{\mathtt{s}})$
	we have $F(\nu,-\mathtt{h}^{\!\mathtt{v}}(\nu))<c<F((\mu,t))$. Since  
	$\mathscr{M}_{s}(\Omega)$ endowed with the weak-$*$ topology,  the functional
	$F$ can be represented as follows $F((\mu,t))=\int_{\Omega} \varphi\, d\mu+at$,
	for some $\varphi\in C(\Omega)$ and $a\in\mathbb{R}$.
	From the previous inequality follows that 
	\[
	\int_{\Omega} \varphi\, d\nu-a\mathtt{h}^{\!\mathtt{v}}(\nu)
	< 
	c
	< 
	\int_{\Omega} \varphi\, d\mu+at, 
	\quad
	\text{for all}\ (\mu,t)\in\mathrm{epi}(-\mathtt{h}^{\mathtt{s}})
	\] 
	Since $\mathscr{M}_{\sigma}(\Omega)$ is compact, 
	and $C^{\alpha}(\Omega)$ is dense in $C(\Omega)$,
	with respect to the uniform norm, up to small change in $c$ we can 
	assume that $\varphi$ in above inequality is a H\"older continuous function.
	From this inequality it is easy to deduce that $a>0$. Without
	loss of generality we can assume that $a=1$. 
	From Corollary \ref{corineq}, definition of $\mathrm{epi}(-\mathtt{h}^{\mathtt{s}})$, 
	the above inequality, and Theorem 3 of \cite{MR3377291}  we have 
	\begin{align*}
	\log \lambda_{(-\varphi)}
	=
	\sup_{\mu\in\mathscr{M}_{\sigma}(\Omega)} 
	\{\int_{\Omega} (-\varphi)\, d\mu + \mathtt{h}^{\mathtt{s}}(\mu)\}
	\leq 
	-c
	<
	\int_{\Omega} (-\varphi)\, d\nu+\mathtt{h}^{\!\mathtt{v}}(\nu)
	\leq 
	\log \lambda_{(-\varphi)}
	\end{align*}
	which is a contradiction. 
	Therefore, for all $\mu\in\mathscr{M}_{\sigma}(\Omega)$ such that 
	$\mathtt{h}^{\mathtt{s}}(\mu)>-\infty$ we have 
	$\mathtt{h}^{\mathtt{s}}(\nu)=\mathtt{h}^{\!\mathtt{v}}(\nu)$.
	
	\noindent{\bf Case 2}.
	Now we have to prove that for all $\mu\in\mathscr{M}_{\sigma}(\Omega)$
	such that $-\mathtt{h}^{\mathtt{s}}(\mu)= +\infty$, we have
	$-\mathtt{h}^{\!\mathtt{v}}(\mu)=+\infty$.  
	The idea is to prove that we can reduce this to the previous case. 
	Suppose that for some 
	$\nu\in\mathscr{M}_{\sigma}(\Omega)$ we have 
	$-\mathtt{h}^{\!\mathtt{v}}(\nu)<\mathtt{h}^{\mathtt{s}}(\nu)= +\infty$.
	Since $-\mathtt{h}^{\mathtt{s}}$ is a convex and lower semicontinuous function from 
	$\mathscr{M}_{s}(\Omega)$ to $[0,+\infty]$, then it is pointwise supremum
	of a family $\mathfrak{F}$ of continuous affine functions from $\mathscr{M}_{s}(\Omega)$
	to $\mathbb{R}$, see Proposition 3.1 of \cite{MR0463994}.  
	A generic member of $\mathfrak{F}$ is a function of the type 
	$\mu\longmapsto \xi(\mu)+ C$, where $\xi\in \mathscr{M}_{s}(\Omega)^*$
	and as observed before can be represented as $\mu\longmapsto \int_{\Omega} g\,d\mu+ C$,
	for some $g\in C(\Omega)$.
	Let $\mathfrak{D}$ denote the family of all affine functions of the form 
	$\mathscr{M}_{s}(\Omega)\ni \mu\longmapsto \int_{\Omega} g\, d\mu - \log \lambda_{g}$,
	with $g$ varying in $C^{\alpha}(\Omega)$.
	Since for all $\mu\in \mathscr{M}_{s}(\Omega)$, we have 
	$\mathtt{h}^{\mathtt{s}}(\mu)\leq \mathtt{h}^{\!\mathtt{v}}(\mu)$
	then we can assume that $\mathfrak{D}\subsetneq \mathfrak{F}$. 
	Given $M>-\mathtt{h}^{\!\mathtt{v}}(\nu)$ there is 
	$F\in\mathfrak{F}\setminus\mathfrak{D}$ such that $F(\nu)>M$. On the other hand,
	we have 
	$\mathtt{h}^{\mathtt{s}}(\prod_{i\in\mathbb{N}}p)=0$
	so $F(\prod_{i\in\mathbb{N}}p)\leq 0$. Without loss of generality we can 
	assume that the last inequality is strict, 
	and for all $\mu\in\mathscr{M}_{s}(\Omega)$, 
	we have $F(\mu) = \int_{\Omega} \varphi d\mu +at$, for some 
	$\varphi\in C^{\alpha}(\Omega)$ and $a\in\mathbb{R}$. 
	Note that $F$ is everywhere less than 
	$-\mathtt{h}^{\mathtt{s}}$ and separates $\mathrm{epi}(-\mathtt{h}^{\mathtt{s}})$ from 
	$(\nu,-\mathtt{h}^{\!\mathtt{v}}(\nu))$. Therefore $a\neq 0$ and 
	the proof is finished by proceeding as in Case 1.  
\end{proof}

\begin{theorem}\label{teo-uniqueness-EqStates-HolderCase}
	If $f \in C^{\alpha}(\Omega)$ then 
	$\mathtt{h}(\mu |\mu_{\bar{f}}) = 0$ if and only if 
	$\mu \in \mathcal{G}^{*}(\bar{f})$. In particular,
	the set of the equilibrium states for $f$ is a singleton. 
\end{theorem}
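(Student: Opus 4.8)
The plan is to leverage the two results already established—the variational principle for $\log\lambda_f$ and the entropy formula of Theorem~\ref{theoremdlr}—to identify the equilibrium states of $f$ with the zeros of $\mu\mapsto\mathtt{h}(\mu\,|\mu_{\bar f})$, and then to show that this zero set is exactly the (singleton) eigenmeasure set $\mathcal{G}^*(\bar f)$. First I would record the variational principle in the form $\log\lambda_f=\sup_{\mu\in\mathscr{M}_\sigma(\Omega)}\{\mathtt{h}^{\mathtt{s}}(\mu)+\int_\Omega f\,d\mu\}$, which is Theorem 3 of \cite{MR3377291} combined with Theorem~\ref{teo-equiv-entrop}. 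Comparing this with $\mathtt{h}(\mu\,|\mu_{\bar f})=\log\lambda_f-\int_\Omega f\,d\mu-\mathtt{h}^{\mathtt{s}}(\mu)$ from Theorem~\ref{theoremdlr} shows that $\mathtt{h}(\mu\,|\mu_{\bar f})=0$ holds precisely when $\mu$ attains the supremum, i.e.\ precisely when $\mu$ is an equilibrium state. So the whole theorem reduces to the equivalence $\mathtt{h}(\mu\,|\mu_{\bar f})=0\iff\mu\in\mathcal{G}^*(\bar f)$, after which the ``singleton'' assertion is automatic.

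The core of the proof is to rewrite $\mathtt{h}(\mu\,|\mu_{\bar f})$ as a single conditional relative entropy, the one-step DLR discrepancy. Writing $\nu=\mu_{\bar f}$ and $\Delta_n=\mathscr{H}_{\Lambda_n}(\mu|\nu)-\mathscr{H}_{\Lambda_{n-1}}(\mu|\nu)$, the chain rule identifies $\Delta_n$ with the relative entropy of the coordinate $x_n$ given $(x_1,\dots,x_{n-1})$. Using the $\sigma$-invariance of both $\mu$ and $\nu$ (the latter because $\mathcal{G}^*(\bar f)\subset\mathscr{M}_\sigma(\Omega)$) to shift the block by one, I would rewrite $\Delta_n$ as the relative entropy of the \emph{first} coordinate $x_1$ given the finite future $(x_2,\dots,x_n)$. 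As $n\to\infty$ these conditioning $\sigma$-algebras increase to $\mathscr{F}_{\Lambda_1^c}$, and since $\bar f$ is bounded the reference conditional densities are bounded away from $0$ and $\infty$; martingale convergence then yields $\Delta_n\to D_1(\mu)$, where
\[
D_1(\mu)\equiv\int_\Omega \mathscr{H}\!\left(\mu(dx_1\,|\,\mathscr{F}_{\Lambda_1^c})\ \big|\ \gamma_1(\cdot\,|\,\cdot)\right)d\mu\ \geq 0
\]
and $\gamma_1(\cdot|y)=\mathscr{L}_{\bar f}(1_{(\cdot)})(\sigma y)$ is the one-step kernel. Because the Ces\`aro averages $n^{-1}\sum_{k\leq n}\Delta_k$ converge to $\mathtt{h}(\mu\,|\nu)$ by Theorem~\ref{theoremdlr}, the convergence of $\Delta_n$ forces the clean identity $\mathtt{h}(\mu\,|\nu)=D_1(\mu)$.

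Granting this identity, the equivalence is immediate. The quantity $D_1(\mu)$ vanishes if and only if $\mu(dx_1\,|\,\mathscr{F}_{\Lambda_1^c})=\gamma_1(\cdot|\cdot)$ for $\mu$-almost every tail, i.e.\ if and only if $\mu\gamma_1=\mu$; and for $\sigma$-invariant $\mu$ this is exactly the eigenmeasure equation $\mathscr{L}_{\bar f}^*\mu=\mu$ (the same computation that gave $\nu\gamma_n=\nu$ for $\nu\in\mathcal{G}^*(\bar f)$, run in reverse using $\sigma_*\mu=\mu$). Hence $\mathtt{h}(\mu\,|\mu_{\bar f})=0\iff\mu\in\mathcal{G}^*(\bar f)$; the converse direction is also transparent, since $\mu\in\mathcal{G}^*(\bar f)$ gives $D_1(\mu)=0$ at once. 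Since $\mathcal{G}^*(\bar f)$ is a singleton, the set of equilibrium states equals $\{\mu_{\bar f}\}$.

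The hard part will be the identification $\mathtt{h}(\mu\,|\nu)=D_1(\mu)$ in the middle step. One must justify that the increments $\Delta_n$ converge in the honest sense (not merely in Ces\`aro mean) and that their limit is the tail-conditional discrepancy: this is where the boundedness of the normalized potential $\bar f$ and the martingale convergence of the conditional Radon--Nikodym derivatives are essential, and where care is needed because the full relative entropy $\mathscr{H}_{\mathscr{F}}(\mu|\nu)$ is typically infinite and cannot be used directly. A secondary technical point is the one-sidedness of the lattice: the chain rule naturally conditions a coordinate on its \emph{past}, whereas the Ruelle/DLR structure conditions on the \emph{future}, so the $\sigma$-invariant relabeling of the entropy increment is not optional but is precisely what makes the tail-conditional limit match the kernel $\gamma_1$.
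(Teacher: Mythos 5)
Your outer reductions are sound and agree with the paper: combining Theorem \ref{theoremdlr} with $\log\lambda_f=\sup_{\mu}\{\mathtt{h}^{\mathtt{s}}(\mu)+\int f\,d\mu\}$ identifies the equilibrium states with the zeros of $\mu\mapsto\mathtt{h}(\mu|\mu_{\bar f})$, and your observation that for $\sigma$-invariant $\mu$ the single consistency $\mu\gamma_1=\mu$ already yields $\mathscr{L}_{\bar f}^{*}\mu=\mu$ is correct (and slightly slicker than the paper, which verifies $\mu\gamma_{n_0}=\mu$ for every $n_0$). The gap is at the step you yourself flag as the hard part: the identity $\mathtt{h}(\mu|\nu)=D_1(\mu)$ obtained by claiming that the increments $\Delta_n=\mathscr{H}_{\Lambda_n}(\mu|\nu)-\mathscr{H}_{\Lambda_{n-1}}(\mu|\nu)$ converge. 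Ces\`aro convergence of the non-negative sequence $(\Delta_n)$ does not give convergence of $(\Delta_n)$, and the martingale convergence you invoke does not apply: writing $\Delta_n=\int\log(f_n/g_n)\,d\mu$ with $f_n,g_n$ the finite-volume densities of $\mu$ with respect to $\nu$ on $\mathscr{F}_{\Lambda_n}$ and $\mathscr{F}_{\Lambda_n\setminus\Lambda_1}$, the individual martingales $f_n,g_n$ converge usefully only when $\mu\ll\nu$ on $\mathscr{F}$, i.e.\ when $\sup_n\mathscr{H}_{\Lambda_n}(\mu|\nu)<\infty$ --- and in that case ergodicity of $\mu_{\bar f}$ already forces $\mu=\mu_{\bar f}$ (this is exactly the paper's warm-up case). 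In the remaining case $\mathscr{H}_{\Lambda_n}(\mu|\nu)\uparrow\infty$ with $\mathscr{H}_{\Lambda_n}(\mu|\nu)/n\to 0$, the measure $\mu$ is singular with respect to $\nu$, $\log f_n$ and $\log g_n$ diverge $\mu$-a.s., and the boundedness of $\bar f$ controls only the $\nu$-conditional kernels, not the $\mu$-conditional distributions $\mu(dx_1|x_2,\dots,x_n)$ whose entropic convergence is what you actually need. The central identity is therefore asserted, not proved.

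The approach is probably salvageable, but not by the mechanism you indicate. From $\Delta_n\ge 0$ and $n^{-1}\sum_{k\le n}\Delta_k\to\mathtt{h}(\mu|\nu)=0$ you get $\liminf_n\Delta_n=0$; combining L\'evy's martingale convergence of $\mu(dx_1|x_2,\dots,x_n)$ to $\mu(dx_1|\mathscr{F}_{\Lambda_1^{c}})$, the uniform convergence of the $\nu$-conditional kernels to $\gamma_1$ (which requires the H\"older regularity of $\bar f$ and the DLR property of $\nu$, i.e.\ precisely the quasilocality the paper uses), joint lower semicontinuity of relative entropy and Fatou, one obtains $D_1(\mu)\le\liminf_n\Delta_n=0$ --- a one-sided inequality that suffices for the theorem but is not the identity $\mathtt{h}(\mu|\nu)=D_1(\mu)$ you claim, and each ingredient would have to be supplied. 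The paper avoids the issue entirely by following Georgii's scheme for Theorem 15.37: an averaging argument produces blocks $\Lambda_{n_2}\setminus\Lambda_{n_1}$ with small relative-entropy defect (Step 1), the elementary inequality $|1-x|\le r(1-x+x\log x)+\varepsilon/2$ converts that defect into an $L^1$ estimate between the two restricted densities (Step 2), and quasilocality of $(\gamma_n)$ together with the DLR equations for $\mu_{\bar f}$ closes the argument (Step 3). As written, your proposal does not establish the implication $\mathtt{h}(\mu|\mu_{\bar f})=0\Rightarrow\mu\in\mathcal{G}^{*}(\bar f)$.
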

\begin{proof}
	If $f \in C^{\alpha}(\Omega)$ then 
	$\mathcal{G}^{*}(\bar{f})$ is a singleton, see \cite{MR3377291}, 
	and so $\mathtt{h}(\mu |\mu_{\bar{f}}) = 0$. 
	
	Conversely, suppose that $\mu\in\mathscr{M}_{\sigma}(\Omega)$ 
	is such that $\mathtt{h}(\mu |\mu_{\bar{f}}) = 0$.
	To prove that $\mu\in\mathcal{G}^{*}(\bar{f})$ it is enough  
	to show that for each fixed $n_0\in\mathbb{N}$ we have 
	$\mu\gamma_{n_0} = \mu$, see reference \cite{CL-rcontinuas-2016} 
	Definition 4 and remark b) below it, and Theorem 2. 
	Since we are assuming that $\mathtt{h}(\mu |\mu_{\bar{f}}) = 0$,
	and we know that 
	$\Lambda\mapsto \mathscr{H}_{ \Lambda}(\mu | \mu_{\bar{f}} )$
	is a non-decreasing function (see \cite[p. 310]{MR2807681}) it follows that 
	$\mathscr{H}_{ \Lambda}(\mu | \mu_{\bar{f}} )<\infty$, for any finite 
	$\Lambda\subset\mathbb{N}$. 
	
	As a warm-up, let us first prove the theorem in the case 
	$
	\sup\{ 
	\mathscr{H}_{ \Lambda_n}(\mu | \mu_{\bar{f}} )\ : n\in\mathbb{N} 
	\}
	<
	+\infty
	$. Under this assumption we have
	$\mathscr{H}_{ \Lambda_n}(\mu | \mu_{\bar{f}} )<+\infty$, 
	for all $n\in\mathbb{N}$, 
	and so there exists the Radon-Nikodym derivative 
	\begin{align}\label{def-phin}
	\varphi_{n}
	\equiv 
	\frac{d\mu|_{\mathscr{F}_{\Lambda_n}}}{d\mu_{\bar{f}}|_{\mathscr{F}_{\Lambda_n}}}.
	\end{align}
	Recall that $\varphi_{n}\geq 0$ and $\mathscr{F}_{\Lambda_n}$-measurable.
	It is a simple matter to check that $(\varphi_{n})_{n\in\mathbb{N}}$ is 
	a martingale relative to $\mu_{\bar{f}}$.
	Indeed, since $\mathscr{F}_{\Lambda_n}$ defines an 
	increasing filtration, for every $m > n$ we 
	have 
	$
	\varphi_{n} 
	= 
	\mu_{\bar{f}}( \varphi_{m} | \mathscr{F}_{\Lambda_{n}}) \
	\mu_{\bar{f}}
	$-a.s. 
	due to uniqueness of Radon-Nikodym derivative.
	
	We will show that this martingale converges
	in $L^{1}(\mu_{\bar{f}})$-norm to some 
	$\mathscr{F}$-measurable function $\varphi\geq 0$.
	To do this it is enough to prove that this 
	sequence is uniformly $\mu_{\bar{f}}$-integrable.
	Indeed, for any $K>1$ we have 
	\begin{align*}
	\int_{\Omega} \varphi_n 1_{\{\varphi_n\geq K\}}\, d\mu_{\bar{f}} 
	&\leq
	\frac{1}{\log K} 
	\int_{\Omega} [\varphi_n \log \varphi_n]\, 
	1_{\{\varphi_n\geq K\}}\, d\mu_{\bar{f}}  
	\\
	&<
	\frac{1}{\log K} 
	\int_{\Omega} [1+\varphi_n \log \varphi_n]\, 
	1_{\{\varphi_n\geq K\}}\, d\mu_{\bar{f}}  
	\\
	&\leq 
	\frac{1}{\log K}
	(1+\sup\{ \mathscr{H}_{ \Lambda_n}(\mu | \mu_{\bar{f}} )\ : n\in\mathbb{N} \}),
	\end{align*}
	where in the last inequality we used that $1+ x\log x\geq 0$, for all $x\geq 0$.
	From these estimates and the assumption that 
	$\sup\{ \mathscr{H}_{ \Lambda_n}(\mu | \mu_{\bar{f}} )\ : n\in\mathbb{N} \}<+\infty$,
	we have that $(\varphi_n)_{n\in\mathbb{N}}$ is a uniformly integrable martingale 
	and thereby convergent in $L^{1}(\mu_{\bar{f}})$.
	Therefore there exists $\varphi \in L^{1}(\mu_{\bar{f}})$ such that 
	$\lim_{n\to\infty} \int_{\Omega}|\varphi_n-\varphi|\, d\mu_{\bar{f}}=0$. 
	Of course, $\varphi = d\mu/d\mu_{\bar{f}}$. Proposition 3 of \cite{MR3377291}
	ensures that $\mu_{\bar{f}}$ is ergodic. Since $\mu$ is a shift invariant probability
	measure and $\mu\ll \mu_{\bar{f}}$ it follows from a classical result 
	in Ergodic Theory that $\mu=\mu_{\bar{f}}$ and the theorem is proved in this
	case. 
	
	Now we go back to the general case. We shall prove that
	for any fixed $n_0\in\mathbb{N}$, we have $\mu\gamma_{n_0} = \mu$.
	We split this proof in three steps.

	\textbf{Step 1}. For each $\delta>0$ and each $n_1>n_0$, there is $n_2\in\mathbb{N}$ 
	such that $n_0<n_1< n_2$ and 
	\begin{align}\label{eq1-dif-ent-rel}
	\mathscr{H}_{ \Lambda_{n_2}}(\mu | \mu_{\bar{f}} ) 
	-
	\mathscr{H}_{ \Lambda_{n_2}\setminus \Lambda_{n_1}  }(\mu | \mu_{\bar{f}} )
	\leq 
	\delta,
	\end{align}
	where 
	$\mathscr{H}_{ \Lambda_{0}}(\mu | \mu_{\bar{f}} )\equiv 0$.
	Indeed, let $m_1\in\mathbb{N}$ be such that if $n\geq \max\{n_1,m_1\}$, then 
	$n^{-1}\mathscr{H}_{ \Lambda_{n}}(\mu | \mu_{\bar{f}} )\leq \delta/n_1$.
	By taking $m_2=\lceil \max\{n_1,m_1\}/n_1\rceil$ we get  
	\begin{align*}
	\frac{1}{m_2} 
	\sum_{k=1}^{m_2}
	( 
	\mathscr{H}_{ \Lambda_{kn_1}}(\mu | \mu_{\bar{f}} ) 
	-
	\mathscr{H}_{ \Lambda_{(k-1)n_1}}(\mu | \mu_{\bar{f}} )
	)
	&\leq 
	\frac{1}{m_2} \mathscr{H}_{ \Lambda_{m_2n_1}}(\mu | \mu_{\bar{f}} ) 
	<\delta.
	\end{align*}
	Therefore there is some $n_2>n_1$ such that 
	$	\mathscr{H}_{ \Lambda_{n_2}}(\mu | \mu_{\bar{f}} ) 
	-
	\mathscr{H}_{ \Lambda_{n_2-n_1}}(\mu | \mu_{\bar{f}} )
	\leq 
	\delta
	$.
	Since $\mu,\mu_{f}\in\mathscr{M}_{\sigma}(\Omega)$ it follows that 
	$
	\mathscr{H}_{ \Lambda_{n_2-n_1}}(\mu | \mu_{\bar{f}} ) 
	= 
	\mathscr{H}_{ \Lambda_{n_2}\setminus \Lambda_{n_1}  }(\mu | \mu_{\bar{f}} )
	$
	and so the statement in step 1 is proved.

	\textbf{Step 2}. Let $\varphi_{n_2}$ be the function defined as in \eqref{def-phin}.
	Given $\varepsilon>0$ there exists $\delta>0$ such that 
	\[
	\int_{\Omega}
	\left| 
	\varphi_{n_2}
	-
	\frac{ d\mu|_{ \mathscr{F}_{\Lambda_{n_2}\setminus \Lambda_{n_1}  }  }  }
	{ d\mu_{\bar{f}}|_{ \mathscr{F}_{\Lambda_{n_2} \setminus \Lambda_{n_1}  }  }    }
	\right|
	\, d\mu_{\bar{f}} 
	< 
	\varepsilon
	\]
	whenever $n_1<n_2$ and 
	$
	\mathscr{H}_{ \Lambda_{n_2}}(\mu | \mu_{\bar{f}} ) 
	-
	\mathscr{H}_{ \Lambda_{n_2}\setminus \Lambda_{n_1}  }(\mu | \mu_{\bar{f}} )
	<
	\delta.
	$
	In fact, if $n_1$ and $n_2$ are two positive integers satisfying $n_1<n_2$.
	Then we have 
	\[
	\varphi_{n_2} = 0
	\quad
	\mu_{\bar{f}}-a.s.\ \text{on the set}\  
	\left\{ 	\frac{ d\mu|_{ \mathscr{F}_{\Lambda_{n_2}\setminus \Lambda_{n_1}  }  }  }
	{ d\mu_{\bar{f}}|_{ \mathscr{F}_{\Lambda_{n_2} \setminus \Lambda_{n_1}  }  }    }  
	= 0 
	\right\}
	\]
	because the last Radon-Nikodym derivative is equal to 
	$\mu_{\bar{f}}(\varphi_{n_2}|\mathscr{F}_{\Lambda_{n_2}\setminus \Lambda_{n_1}  })$. 
	Consider the function $\psi:[0,\infty)\to\mathbb{R}$, given by $\psi(x)=1-x+x\log x$.
	For some $0<r<\infty$, we have the following inequality $|1-x|\leq r\psi(x)+\varepsilon/2$, 
	for all $x\geq 0$. Therefore,
	\begin{align*}
	\int_{\Omega}&
	\left| 
	\varphi_{n_2}
	-
	\frac{ d\mu|_{ \mathscr{F}_{\Lambda_{n_2}\setminus \Lambda_{n_1}  }  }  }
	{ d\mu_{\bar{f}}|_{ \mathscr{F}_{\Lambda_{n_2} \setminus \Lambda_{n_1}  }  }    }
	\right|
	d\mu_{\bar{f}}
	\\
	&=
	\int_{\Omega}
	\frac{ d\mu|_{ \mathscr{F}_{\Lambda_{n_2}\setminus \Lambda_{n_1}  }  }  }
	{ d\mu_{\bar{f}}|_{ \mathscr{F}_{\Lambda_{n_2} \setminus \Lambda_{n_1}  }  }    }
	\left|
	1- 
	\varphi_{n_2}
	\left( \frac{ d\mu|_{ \mathscr{F}_{\Lambda_{n_2}\setminus \Lambda_{n_1}  }  }  }
	{ d\mu_{\bar{f}}|_{ \mathscr{F}_{\Lambda_{n_2} \setminus \Lambda_{n_1}  }  }    }
	\right)^{-1}
	\right|
	d\mu_{\bar{f}}
	\\
	&\leq 
	r
	\int_{\Omega}
	\frac{ d\mu|_{ \mathscr{F}_{\Lambda_{n_2}\setminus \Lambda_{n_1}  }  }  }
	{ d\mu_{\bar{f}}|_{ \mathscr{F}_{\Lambda_{n_2} \setminus \Lambda_{n_1}  }  }    }
	\; 	
	\psi\!\left( 
	\varphi_{n_2}
	\left( \frac{ d\mu|_{ \mathscr{F}_{\Lambda_{n_2}\setminus \Lambda_{n_1}  }  }  }
	{ d\mu_{\bar{f}}|_{ \mathscr{F}_{\Lambda_{n_2} \setminus \Lambda_{n_1}  }  }    }
	\right)^{-1}
	\right)\, 
	d\mu_{\bar{f}}
	+\frac{\varepsilon}{2}
	\\
	&=
	r
	\int_{\Omega}
	\varphi_{n_2}
	\log 
	\left[
	\varphi_{n_2}
	\left( \frac{ d\mu|_{ \mathscr{F}_{\Lambda_{n_2}\setminus \Lambda_{n_1}  }  }  }
	{ d\mu_{\bar{f}}|_{ \mathscr{F}_{\Lambda_{n_2} \setminus \Lambda_{n_1}  }  }    }
	\right)^{-1}
	\right]
	d\mu_{\bar{f}}
	+\frac{\varepsilon}{2}
	\\
	&= 
	r
	(\mathscr{H}_{ \Lambda_{n_2}}(\mu | \mu_{\bar{f}} ) 
	-
	\mathscr{H}_{ \Lambda_{n_2}\setminus \Lambda_{n_1}  }(\mu | \mu_{\bar{f}} ))
	+\frac{\varepsilon}{2}.
	\end{align*}
	By taking $\delta = \varepsilon/2r$ the step 2 statement is proved.

	\textbf{Step 3}. To prove that $\mu\gamma_{n_0} = \mu$ we fix a local function $\phi$
	(a bounded $\mathscr{F}_{\Lambda_n}$-measurable function for some $n\in\mathbb{N}$) 
	and $\varepsilon>0$. Since $\bar{f}$ is a H\"older potential it follows from 
	Theorem 3 of \cite{CL-rcontinuas-2016} that the family of probability kernels
	$(\gamma_n)_{n\geq 1}$ is quasilocal. Therefore there exists a local
	$\mathscr{F}_{\Lambda_{n_0}^c}$-measurable function $\tilde{\phi}$ such that 
	\[
	\sup_{y\in \Omega} \left| \tilde{\phi}(y) - \int_{\Omega}\phi(x) d\gamma_{n_0}(x|y)\right| 
	<
	\varepsilon.
	\]
	Let $n_1>n_0$ be such that $\phi$ is $\mathscr{F}_{\Lambda_{n_1}}$-measurable and
	$\tilde{\phi}$ is $\mathscr{F}_{\Lambda_{n_1}\setminus\Lambda_{n_0}}$-measurable. 
	Choose $\delta$ in terms of $\varepsilon$ as in Step 2, and define $n_2$ in terms
	of $n_1$ and $\delta$ as in Step 1. Then we have 
	\begin{align*}
	\left|
	\int_{\Omega} \phi \, d\mu\gamma_{n_0} - \int_{\Omega} \phi \, d\mu
	\right|
	&\leq
	\int_{\Omega} \left|\int_{\Omega}\phi(x) d\gamma_{n_0}(x|y) - \tilde{\phi}(y)\right| \, d\mu(y)
	\\
	&+
	\left|\int_{\Omega} \tilde{\phi}\, d\mu - \int_{\Omega} 
	\tilde{\phi}
	\frac{ d\mu|_{ \mathscr{F}_{\Lambda_{n_2}\setminus \Lambda_{n_1}  }  }  }
	{ d\mu_{\bar{f}}|_{ \mathscr{F}_{\Lambda_{n_2} \setminus \Lambda_{n_1}  }  }    }
	\, d\mu_{\bar{f}}\right|
	\\
	&+
	\int_{\Omega} 
	\frac{ d\mu|_{ \mathscr{F}_{\Lambda_{n_2}\setminus \Lambda_{n_1}  }  }  }
	{ d\mu_{\bar{f}}|_{ \mathscr{F}_{\Lambda_{n_2} \setminus \Lambda_{n_1}  }  }    }(y) 
	\left|\int_{\Omega}\phi(x) d\gamma_{n_0}(x|y) - \tilde{\phi}(y)\right|  
	d\mu_{\bar{f}}(y)
	\\
	&+
	\left|\int_{\Omega} 
	\frac{ d\mu|_{ \mathscr{F}_{\Lambda_{n_2}\setminus \Lambda_{n_1}  }  }  }
	{ d\mu_{\bar{f}}|_{ \mathscr{F}_{\Lambda_{n_2} \setminus \Lambda_{n_1}  }  }    }(y) 
	\left(\int_{\Omega}\phi(x) d\gamma_{n_0}(x|y) - \phi(y)\right)  
	d\mu_{\bar{f}}(y)
	\right|
	\\
	&+
	\|\phi\|_{\infty}
	\int_{\Omega} 
	\left|
	\varphi_{n_2}-
	\frac{ d\mu|_{ \mathscr{F}_{\Lambda_{n_2}\setminus \Lambda_{n_1}  }  }  }
	{ d\mu_{\bar{f}}|_{ \mathscr{F}_{\Lambda_{n_2} \setminus \Lambda_{n_1}  }  }    }
	\right| 
	d\mu_{\bar{f}}
	\\
	&+
	\left| 
	\int_{\Omega} \varphi_{n_2}\phi\,  d\mu_{\bar{f}}
	-\int_{\Omega} \phi\,  d\mu 
	\right|.
	\end{align*}
	Since $\tilde{\phi}$ is 
	$\mathscr{F}_{\Lambda_{n_2}\setminus \Lambda_{n_1} }$-measurable 
	and $\phi$ is $\mathscr{F}_{\Lambda_{n_2} }$-measurable,
	the second and last terms on rhs above are zero.
	Due to the choice of $\tilde{\phi}$, the first and third 
	terms are each at most $\varepsilon$.
	The fifth term is not larger than 
	$\varepsilon\|\phi\|_{\infty}$ because of our choice of $n_2$. 
	The fourth term is zero because of the DLR-equations. 
	Indeed, since $\mu_{\bar{f}}\in \mathcal{G}^{*}(\bar{f})$ and
	$\bar{f}$ is H\"older follow from Theorem 2 of \cite{CL-rcontinuas-2016} that 
	$\mu_{\bar{f}}\gamma_{n_0}=\mu_{\bar{f}}$, therefore 
	\begin{multline*}
	\left|\int_{\Omega} 
	\frac{ d\mu|_{ \mathscr{F}_{\Lambda_{n_2}\setminus \Lambda_{n_1}  }  }  }
	{ d\mu_{\bar{f}}|_{ \mathscr{F}_{\Lambda_{n_2} \setminus \Lambda_{n_1}  }  }    }
	(y)
	\left(\int_{\Omega}\phi(x) d\gamma_{n_0}(x|y) - \phi(y)\right)  
	d\mu_{\bar{f}}(y) \right|
	\\ 
	= 
	\left|
	\int_{\Omega}\int_{\Omega} 
	\frac{ d\mu|_{ \mathscr{F}_{\Lambda_{n_2}\setminus \Lambda_{n_1}  }  }  }
	{ d\mu_{\bar{f}}|_{ \mathscr{F}_{\Lambda_{n_2} \setminus \Lambda_{n_1}  }  }    }
	(x)\phi(x)
	\, d\gamma_{n_0}(x|y)d\mu_{\bar{f}}(y) 
	- \int_{\Omega} 
	\phi\,
	\frac{ d\mu|_{ \mathscr{F}_{\Lambda_{n_2}\setminus \Lambda_{n_1}  }  }  }
	{ d\mu_{\bar{f}}|_{ \mathscr{F}_{\Lambda_{n_2} \setminus \Lambda_{n_1}  }  }    }
	\, d\mu_{\bar{f}} \right|=0,
	\end{multline*}
	where in the first equality above we used that $\gamma_{n_0}$ 
	is a proper probability kernel and the DLR-equations. 	
\end{proof}

\section{Concluding Remarks}

In \cite{MR3377291} the classical notion of {\it equilibrium states},
when $\mathcal{A}$ is finite, is generalized for any compact metric 
alphabet. 
The authors fixed an a priori Borel probability measure $p$ on $\mathcal{A}$
and then defined equilibrium states for a H\"older potential $f$ as being 
a $\sigma$-invariant probability measure solving the 
variational problem
\[
\sup_{\mu\in\mathscr{M}_{\sigma}(\Omega) }
\mathtt{h}^{\!\mathtt{v}}(\mu)+\int_{\Omega} f\, d\mu.
\]
They shown that any probability measure $\mu$ satisfying 
$\mathscr{L}_{\bar{f}}^{*}\mu=\mu$ is a solution 
to the variational problem.
They also shown that there is a unique solution to 
$\mathscr{L}_{\bar{f}}^{*}\mu=\mu$, 
however they do not shown that any solution of the variational problem 
has to be of this form. We settled this question 
by combining the results of 
Theorems \ref{theoremdlr}, \ref{teo-equiv-entrop} 
and \ref{teo-uniqueness-EqStates-HolderCase}.  
We remark that when $\mathcal{A}$ is finite this result was first obtained in \cite{2015arXiv150801297G}.
Our techniques also provided the uniqueness 
of the equilibrium states for 
potentials in Walters space, which is defined as follows.
\begin{definition}[Walters Space]\label{def-walters-space}
	A potential
	$f:\Omega\to\mathbb{R}$ is said to be a Walters potential, 
	notation $W(\Omega,\sigma)$, 
	if the following condition is satisfied 
	\begin{align}\label{def-walters-class}
	\sup_{n\geq 1}\, 
	\sup_{\mathbf{a}\in \mathcal{A}^n}\, 
	|S_n(f)(\mathbf{a}x)-S_n(f)(\mathbf{a}y)|
	\to 0,
	\quad\text{when}\ d_{\Omega}(x,y)\to 0.
	\end{align}
\end{definition}

Theorem \ref{theoremdlr} ensures that 
\[
\lim_{n \rightarrow \infty} 
\frac{1}{n} \mathscr{H}_{\Lambda_n}(\mu | \nu)
=
\log\lambda_{f} - \int_{\Omega}f \, d\mu - \mathtt{h}^{\mathtt{s}}(\mu),
\]
where $f$ is an $\alpha$-H\"older potential,  $\mu\in \mathscr{M}_{\sigma}(\Omega)$ 
and $\nu\in \mathcal{G}^*(\bar{f})$. 
The way we computed the above limit, the regularity properties of 
$f$ are crucial.
The main steps in this computation is the existence of $\bar{f}$
(a normalized potential cohomologous to $f$),  
the identity \eqref{eq-free-energy}, and $o(n)$ estimates for \eqref{eq-aux1}. 
The two former conditions are also verified for any potential in the Walters class and 
the existence of $\bar{f}$ is proved in \cite{MR3538412}, 
for a general compact metric space alphabets.
Therefore  Theorem \ref{theoremdlr} can be generalized for such potentials.
In the case of finite alphabet, the theorem can also be generalized to potentials
in the Bowen class. 
Although it is not known whether 
the eigenfunctions associated to the maximal eigenvalue on this class is a continuous 
function its uniform upper and lower bounds, obtained in \cite{MR1783787},
are enough for our argument. For a general compact metric space alphabet, 
as far as we know, the existence of the maximal eigenfunction has not been 
proved.

To obtain the uniqueness of the equilibrium states for a Walters potential $f$,
we proceed as in the proof of 
Theorem \ref{teo-uniqueness-EqStates-HolderCase}, but using 
the equality $\mathcal{G}^{*}(\bar{f})= \mathcal{G}^{DLR}(\bar{f})$ 
and the uniquness of $\mathcal{G}^{*}(\bar{f})$ 
proved in \cite{CL-rcontinuas-2016}.

Since Theorem \ref{theoremdlr} 
is based on the maximal spectral data of the Ruelle operator we have  
a natural way to extend the definition of $\mathtt{h}(\mu|\nu)$, when $\nu$ 
is an equilibrium state of an arbitrary continuous potential $f$. 
For this extension instead of, using the maximal eigenvalue we 
use the spectral radius of $\mathscr{L}_{f}$ acting on $C(\Omega)$. 
So we can define $\mathtt{h}(\mu|\nu)$, where  
$\mu\in\mathscr{M}_{\sigma}(\Omega)$ and $\nu$ an 
equilibrium state for some continuous potential $f$, by putting   
\[
\mathtt{h}(\mu|\nu)
\equiv 
\log\rho(\mathscr{L}_{f}) - \int_{\Omega}f \, d\mu - \mathtt{h}^{\!\mathtt{v}}(\mu).
\]
Note that the above expression is similar to the one obtained in Theorem 
\ref{theoremdlr}, but the specific entropy $\mathtt{h}^{\mathtt{s}}(\mu)$ is replaced 
by $\mathtt{h}^{\!\mathtt{v}}(\mu)$ and we can see that  $\mathtt{h}(\mu|\nu)\geq 0$.
On the other hand, we do not know, in general,
whether $n^{-1} \mathscr{H}_{\Lambda_n}(\mu|\gamma_{\Lambda_n}(\cdot|y))$
converges to $\mathtt{h}(\mu|\nu)$. Counterexamples, on the lattice $\mathbb{Z}$, given in 
Section A.5.2 of \cite{MR1241537} (classical lattice systems) 
and in \cite{MR1653432} (quantum lattice systems) shows that 
this convergence can be a delicate issue. 

\section{Acknowledgements}
	The authors express many thanks to Aernout van Enter and Artur Lopes for 
	their valuable comments and references. L. Cioletti 
	and D. Aguiar are supported by CNPq and R. Ruviaro is supported by
	FAP-DF and FEMAT.


\begin{thebibliography}{GKLM18}
	
	\bibitem[Bal00]{MR1793194}
	V.~Baladi.
	\newblock {\em Positive transfer operators and decay of correlations},
	volume~16 of {\em Advanced Series in Nonlinear Dynamics}.
	\newblock World Scientific Publishing Co., Inc., River Edge, NJ, 2000.
	
	\bibitem[CL16]{CL-rcontinuas-2016}
	L.~Cioletti and A.~O. Lopes.
	\newblock Ruelle operator for continuous potentials and {DLR}-{G}ibbs measures.
	\newblock {\em Preprint arXiv:1608.03881}, 2016.
	
	\bibitem[CS16]{MR3538412}
	L.~Cioletti and E.~A. Silva.
	\newblock Spectral properties of the {R}uelle operator on the {W}alters class
	over compact spaces.
	\newblock {\em Nonlinearity}, 29(8):2253--2278, 2016.
	
	\bibitem[ET76]{MR0463994}
	I.~Ekeland and R.~Temam.
	\newblock {\em Convex analysis and variational problems}.
	\newblock North-Holland Publishing Co., Amsterdam-Oxford; American Elsevier
	Publishing Co., Inc., New York, 1976.
	\newblock Translated from the French, Studies in Mathematics and its
	Applications, Vol. 1.
	
	\bibitem[Geo11]{MR2807681}
	H-O. Georgii.
	\newblock {\em Gibbs measures and phase transitions}, volume~9 of {\em de
		Gruyter Studies in Mathematics}.
	\newblock Walter de Gruyter \& Co., Berlin, second edition, 2011.
	
	\bibitem[GKLM18]{2015arXiv150801297G}
	P.~{Giulietti}, B.~{Kloeckner}, A.~{Lopes}, and D.~{Marcon}.
	\newblock {The calculus of thermodynamical formalism}.
	\newblock {\em to appear in J. Eur. Math. Soc.}, 2018.
	
	\bibitem[Isr79]{MR517873}
	R.~B. Israel.
	\newblock {\em Convexity in the theory of lattice gases}.
	\newblock Princeton University Press, Princeton, N.J., 1979.
	\newblock Princeton Series in Physics, With an introduction by Arthur S.
	Wightman.
	
	\bibitem[LMMS15]{MR3377291}
	A.~O. Lopes, J.~K. Mengue, J.~Mohr, and R.~R. Souza.
	\newblock Entropy and variational principle for one-dimensional lattice systems
	with a general {\it a priori} probability: positive and zero temperature.
	\newblock {\em Ergodic Theory Dynam. Systems}, 35(6):1925--1961, 2015.
	
	\bibitem[MvE98]{MR1653432}
	H.~Moriya and A.~van Enter.
	\newblock On thermodynamic limits of entropy densities.
	\newblock {\em Lett. Math. Phys.}, 45(4):323--330, 1998.
	
	\bibitem[PP90]{MR1085356}
	W.~Parry and M.~Pollicott.
	\newblock Zeta functions and the periodic orbit structure of hyperbolic
	dynamics.
	\newblock {\em Ast\'erisque}, (187-188):268, 1990.
	
	\bibitem[Rue67]{MR0217610}
	D.~Ruelle.
	\newblock A variational formulation of equilibrium statistical mechanics and
	the {G}ibbs phase rule.
	\newblock {\em Comm. Math. Phys.}, 5:324--329, 1967.
	
	\bibitem[Rue68]{MR0234697}
	D.~Ruelle.
	\newblock Statistical mechanics of a one-dimensional lattice gas.
	\newblock {\em Comm. Math. Phys.}, 9:267--278, 1968.
	
	\bibitem[vEFS93]{MR1241537}
	A.~C.~D. van Enter, R.~Fern\'andez, and A.~D. Sokal.
	\newblock Regularity properties and pathologies of position-space
	renormalization-group transformations: scope and limitations of {G}ibbsian
	theory.
	\newblock {\em J. Statist. Phys.}, 72(5-6):879--1167, 1993.
	
	\bibitem[Wal75]{MR0390180}
	P.~Walters.
	\newblock A variational principle for the pressure of continuous
	transformations.
	\newblock {\em Amer. J. Math.}, 97(4):937--971, 1975.
	
	\bibitem[Wal01]{MR1783787}
	P.~Walters.
	\newblock Convergence of the {R}uelle operator for a function satisfying
	{B}owen's condition.
	\newblock {\em Trans. Amer. Math. Soc.}, 353(1):327--347, 2001.
	
\end{thebibliography}
\end{document}